\newcommand{\CM}{Cohen-Macaulay}
\newcommand{\wrt}{with respect to}
\newcommand{\n}{\mathfrak{n} }
\newcommand{\m}{\mathfrak{m} }
\newcommand{\M}{\mathfrak{M} }
\newcommand{\Z}{\mathbb{Z} }
\newcommand{\F}{\mathcal{F} }
\newcommand{\G}{\mathcal{G}}
\newcommand{\Hc}{\mathcal{H}}
\newcommand{\rt}{\rightarrow}
\newcommand{\ov}{\overline}
\newcommand{\xb}{\mathbf{x}}
\newcommand{\wh}{\widehat }
\newcommand{\depth}{\operatorname{depth}}
\newcommand{\ord}{\operatorname{ord}}
\newcommand{\soc}{\operatorname{soc}}
\newcommand{\type}{\operatorname{type}}
\newcommand{\codim}{\operatorname{codim}}
\newcommand{\Hom}{\operatorname{Hom}}
\newcommand{\Ext}{\operatorname{Ext}}
\theoremstyle{plain}
\newtheorem{theorem}{Theorem}[section]
\newtheorem{proposition}[theorem]{Proposition}
\theoremstyle{definition}
\newtheorem{remark}[theorem]{Remark}
\newtheorem{example}[theorem]{Example}
\theoremstyle{remark}
\begin{document}

\title[Teter rings]{Higher dimensional Teter rings}
\author{Tony~J.~Puthenpurakal}
\date{\today}
\address{Department of Mathematics, IIT Bombay, Powai, Mumbai 400 076, India}

\email{tputhen@gmail.com}
\subjclass{Primary 13H10 ; Secondary 13A30 }
\keywords{ Gorenstein rings, Finite Cohen–Macaulay type, canonical module, associated graded rings and modules}
 \begin{abstract}
Let $(A,\m)$ be a complete \CM \ local ring. Assume $A$ is not Gorenstein. We say $A$ is a Teter ring if there exists a complete Gorenstein ring $(B,\n)$ with $\dim B = \dim A$ and a surjective map
$B \rt A$ with $e(B) - e(A) = 1$ (here $e(A)$ denotes multiplicity of $A$). We give an intrinsic characterization of Teter rings which are domains. We say a Teter ring is a strongly Teter ring if $G(B) = \bigoplus_{i \geq 0}\n^i/\n^{i+1}$ is also a Gorenstein ring. We give an intrinsic characterizations of strongly Teter rings which are domains. If $k$ is algebraically closed field of characteristic zero and $R$ is a standard graded \CM \ $k$-algebra of finite representation type (and not Gorenstein) then we show that $\wh{R_\M}$ is a Teter ring (here $\M$ is the maximal homogeneous ideal of $R$).
\end{abstract}
 \maketitle
\section{introduction}
Let $(A,\m)$ be an Artinian local ring.  Note there exists an Artinian Gorenstein local ring $B$ with a surjective map $B \rt A$. Let $\ell(A)$ denote length of $A$. The Gorenstein co-length of $A$ is defined as
$$g(A) = \min \{ \ell(B) - \ell(A) \mid \text{$B$ is  Gorenstein Artin  local ring mapping onto $A$  } \}.$$
We note that $g(A) = 0$ if and only if $A$ is Gorenstein. Observe that $g(A) = 1$ if and only if $A$ is not Gorenstein and $A = B/\soc(B)$ for a Gorenstein Artin local ring $B$. W. Teter gives an intrinsic characterization for such rings in his paper \cite{T}. In their paper \cite{HV}, C. Huneke and A. Vraciu refer to these rings as Teter’s rings.

We generalize the definition of Teter rings to higher dimensions. Let $(A,\m)$ be a local \CM \ local ring of dimension $d$. We assume $A$ is a quotient of a regular local ring $T$. Let $e(A)$ denote the multiplicity of $A$. We note that there exists a  Gorenstein local ring $B$ mapping onto $A$ with $\dim B = \dim A$. Let $U(A)$ denote the set of all  Gorenstein local rings $B$
which are quotients of $T$, mapping onto $A$ with $\dim B = \dim A$.
Set
$$g(A) = \min \{ e(B) - e(A) \mid \text{$B \in U(A)$ } \}.$$

We note that $g(A) = 0$ if and only if $A$ is a Gorenstein local ring. If $g(A) = 1$ then we call $A$ to be a \textit{Teter} ring. If $B \in U(A)$ such that $e(B) - e(A) = 1$ then we call $B$ to be a\textit{ Teter Gorenstein approximation} of $A$. Note we are \emph{not} asserting that $B$ is uniquely determined by $A$.

 In general giving an intrinsic characterization of Teter's ring seems to be difficult. However for domains we have
\begin{theorem}
\label{main} Let $(A,\m)$ be a  \CM \  local domain. Assume $A$ is a quotient of a regular local ring $T$.  Assume $A$ is not Gorenstein. Let $\omega_A$ denote the canonical module of $A$.
The following assertions are equivalent:
\begin{enumerate}[\rm (i)]
\item $A$ is a Teter ring.
\item There exists a proper ideal $J$ of $A$ with $\omega_A \cong J$ and $\codim A/J \leq 1$.
\end{enumerate}
Furthermore if $A$ is a  Teter-ring then the Teter Gorenstein approximation of $A$ \\ is unique (up to isomorphism).
\end{theorem}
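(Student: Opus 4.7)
The strategy is to analyze the structure of a Teter approximation via multiplicity arithmetic for the forward direction, and to construct one as a fiber product for the reverse. For (i) $\Rightarrow$ (ii), let $B$ be a Teter Gorenstein approximation of $A$ and set $P = \ker(B \twoheadrightarrow A)$. Since $A$ is a domain with $\dim A = \dim B$, $P$ is a minimal prime of $B$; as $B$ is Gorenstein (hence equidimensional) every minimal prime has dimension $d$. The associativity formula $e(B) = \sum_i \ell_{B_{P_i}}(B_{P_i})\, e(B/P_i)$, together with $e(B) = e(A)+1$ and $e(A) \geq 2$ (as $A$ is not regular), forces $\ell(B_P)=1$ and the existence of exactly one further minimal prime $Q$ with $\ell(B_Q) = e(B/Q) = 1$. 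The length-one conditions give $(0) = P \cap Q$, so $B$ is reduced; Hartshorne's connectedness theorem for \CM \ local rings yields $\dim B/(P+Q) \geq d-1$, and a depth chase on the Mayer--Vietoris sequence $0 \to B \to B/P \oplus B/Q \to B/(P+Q) \to 0$ shows $R := B/Q$ is \CM \ of dimension $d$, hence regular by $e(R)=1$. Finally, $\omega_A \cong \Hom_B(A,B) = \ann_B(P)$; localization at the two minimal primes gives $\ann_B(P) = Q$, and since $P \cap Q = 0$ the inclusion $Q \hookrightarrow B/P = A$ realizes $\omega_A$ as an ideal $J \subset A$ with $A/J \cong B/(P+Q)$, whence $\codim A/J \leq 1$.

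For (ii) $\Rightarrow$ (i), put $T = A/J$. The depth lemma applied to $0 \to J \to A \to T \to 0$ (with $J \cong \omega_A$ maximal \CM) gives $T$ \CM \ of dimension $d-1$, and applying $\Hom_A(-,\omega_A)$ produces $0 \to \omega_A \to A \to \omega_T \to 0$, so $\omega_T \cong T$ and $T$ is Gorenstein. Next, choose a complete regular local ring $R$ of dimension $d$ with a surjection $R \twoheadrightarrow T$ whose kernel is principal (a hypersurface presentation of $T$), arranged to be a quotient of the ambient regular local ring (possibly after enlarging it). Form the pullback $B := A \times_T R$. Then $B$ is a local ring of dimension $d$ and a quotient of the ambient regular local ring, with a surjection to $A$; the short exact sequence $0 \to B \to A \oplus R \to T \to 0$ combined with additivity of top-dimensional multiplicities (noting $\dim T < d$) yields $e(B) = e(A) + e(R) = e(A) + 1$. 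The Gorenstein property of $B$ follows from the standard criterion that a fiber product of \CM \ rings over a common quotient is Gorenstein when the two defining kernels are isomorphic to the respective canonical modules---here $J \cong \omega_A$ and the principal kernel of $R \to T$ is isomorphic to $\omega_R = R$.

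Uniqueness up to isomorphism follows from the forward analysis: every Teter approximation has the form $A \times_T R$ with $T = A/J$ determined by $A$ and $R$ a complete regular local ring of dimension $d$, which is unique up to isomorphism by Cohen's structure theorem. The main technical obstacle of the plan is the construction of $R$ in the reverse direction: one must realize $T = A/J$ as a hypersurface in a regular local ring of dimension $d$, equivalently show $\operatorname{embdim}(T) \leq d$. This requires choosing the embedding $\omega_A \cong J$ so that the image of $J$ in $\m/\m^2$ has rank at least $\operatorname{embdim}(A) - d$, and establishing the existence of such an embedding from hypothesis (ii) is the delicate step where the condition $\codim A/J \leq 1$ is expected to play the essential role.
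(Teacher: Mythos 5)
Your overall architecture is sound and genuinely parallels the paper's (identify $\ker(B\to A)$ and its ``complement'', realize $\omega_A$ as an ideal via duality; build the converse as a fiber product), but the forward direction has a real gap at its central step. You claim that a depth chase on the Mayer--Vietoris sequence $0 \to B \to B/P \oplus B/Q \to B/(P+Q) \to 0$ shows $R = B/Q$ is \CM \ of dimension $d$. It does not: the depth lemma gives $\depth(B/P\oplus B/Q) \geq \min(\depth B, \depth B/(P+Q))$, and since $\dim B/(P+Q) = d-1$ the right-hand side is at most $d-1$; none of the available short exact sequences (including $0 \to Q \to B \to B/Q \to 0$ with $Q\cong\omega_A$ maximal \CM) yields more than $\depth B/Q \geq d-1$ by depth-counting alone. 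The paper gets this for free from duality: dualizing $0 \to P \to B \to A \to 0$ into the Gorenstein ring $B$ gives the exact sequence $0 \to \omega_A \to B \to \Hom_B(P,B) \to 0$ (exact because $\Ext^1_B(A,B)=0$ as $A$ is maximal \CM \ over $B$), and $\Hom_B(P,B)\cong B/(0:_B P) = B/Q$ is then the dual of a maximal \CM \ module, hence maximal \CM. Alternatively, since $B/Q$ is a local domain that is a quotient of a regular local ring, Ratliff's theorem makes it formally equidimensional and Nagata's multiplicity-one criterion gives regularity directly, bypassing the \CM \ question; but some such argument must be supplied, since your route to $\codim A/J\leq 1$ passes through the regularity of $B/Q$.

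Two smaller points. First, in (ii) $\Rightarrow$ (i) you invoke as a ``standard criterion'' that $A\times_T R$ is Gorenstein when the two defining kernels are the respective canonical modules; this is true but is precisely the content one must prove, and the paper does so concretely by cutting down by a suitable superficial/regular sequence and checking that the socle of the Artinian reduction is one-dimensional (the map to $k$ kills the socle, which therefore lives in $\omega_A/\xb\omega_A$). Second, your closing paragraph identifies as ``the main technical obstacle'' the need to choose the embedding $\omega_A\cong J$ so that $\operatorname{embdim}(A/J)\leq d$ --- but this is exactly the hypothesis $\codim A/J \leq 1$ in (ii) (together with $\dim A/J = d-1$, which follows from the depth lemma applied to $0\to J\to A\to A/J\to 0$), so there is nothing delicate to establish there; the hypersurface presentation $A/J \cong Q/(a)$ is then immediate. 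The uniqueness argument is at about the same level of detail as the paper's and is acceptable as a sketch, though both leave implicit why the ideal $J$ produced from an arbitrary Teter approximation leads to the same fiber product.
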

 Also if $(R,\m)$ is a local Noetherian ring then $\codim R = \mu(\m) - \dim R$ (here $\mu(\m)$ denotes the minimal number of generators of $\m$).

If $(R,\n)$ is a Noetherian local ring then $G(R) = \bigoplus_{i \geq 0}\n^i/\n^{i+1}$ is the associated graded ring of $R$. We say $A$ is strongly Teter if $A$ has a Teter Gorenstein approximation $(B,\n)$ with $G(B)$ Gorenstein.
Being strongly Gorenstein is a very restricted property. We show
\begin{theorem}
\label{strong}
Let $(A,\m)$ be a  Cohen-Macaulay local  ring. Assume $A$ is a quotient of a regular local ring $T$. If $A$ is strongly Teter then $G(A)$ is \CM.
\end{theorem}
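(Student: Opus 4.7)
I would first show that $I := \ker(B \twoheadrightarrow A)$ is principal. Choose a superficial system of parameters $\xb = x_1, \ldots, x_d$ in $B$. Since $G(B)$ is Cohen-Macaulay, $\xb$ is a regular sequence on $B$, and since $A$ is Cohen-Macaulay of the same dimension, the images of $\xb$ form a regular sequence on $A$. The Koszul complex on $\xb$ is thus acyclic over $A$, so tensoring $0 \to I \to B \to A \to 0$ with $B/\xb B$ yields an injection $I/\xb I \hookrightarrow B/\xb B$ whose image has length $e(B) - e(A) = 1$. Hence $I/\xb I \cong k$, Nakayama gives $I = (u)$, and $u + \xb B$ spans the one-dimensional socle of the Artinian Gorenstein ring $B/\xb B$.

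\textbf{Induction.} The base case $d = 0$ is trivial since $G(A)$ is Artinian. For $d \geq 1$, use prime avoidance to pick $x \in \n \setminus \n^2$ that is $\n$-superficial in $B$ and whose image $\bar x$ is $\m$-superficial in $A$. Since $G(B)$ is Cohen-Macaulay, $x^*$ is a nonzerodivisor in $G(B)$, so $B' := B/xB$ is Gorenstein with $G(B') \cong G(B)/(x^*)$ Gorenstein. The induced surjection $B' \twoheadrightarrow A' := A/\bar xA$ has $e(B') - e(A') = e(B) - e(A) = 1$, and $A'$ is non-Gorenstein Cohen-Macaulay. Hence $A'$ is strongly Teter with Gorenstein approximation $B'$, so by the inductive hypothesis $G(A')$ is Cohen-Macaulay of dimension $d - 1$. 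By Sally's machine, to conclude $G(A)$ is Cohen-Macaulay it suffices to show $\bar x^* \in G(A)_1$ is a nonzerodivisor: then $G(A)/(\bar x^*) \cong G(A')$ and $\depth G(A) = \depth G(A') + 1 = d$.

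\textbf{Sally reduction.} By Valabrega--Valla, $\bar x^*$ being a nonzerodivisor in $G(A)$ is equivalent to $\bar x A \cap \m^{n+1} = \bar x \m^n$ for all $n \geq 0$. Unwinding through $B \twoheadrightarrow A = B/(u)$ and using $(u :_B x) = (u)$ (since $\bar x$ is a nonzerodivisor on the Cohen-Macaulay ring $A$), this becomes the implication
\begin{equation*}
xb \in \n^{n+1} + (u) \ \Longrightarrow \ b \in \n^n + (u), \qquad n \geq 0.
\end{equation*}

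\textbf{Main obstacle.} Establishing the above implication is the crux. The key input is the socle-type property $u \cdot \n \subseteq \xb B$, holding because $u + \xb B$ lies in the socle of $B/\xb B$. Combined with Valabrega--Valla in the Cohen-Macaulay ring $G(B)$, this refines to $u \cdot \n^k \subseteq \xb \cdot \n^{k + r - 1}$, where $r = \ord_{\n}(u)$. Writing $xb = w + cu$ with $w \in \n^{n+1}$ and passing to initial forms, one uses that $x^*$ is a nonzerodivisor in $G(B)$ to control the order of $c$, and one must handle cancellations $c^* u^* = 0$ in $G(B)$ by lifting them to honest annihilation relations $c_0 u = 0$ in $B$. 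This amounts to the identification $G(\ann_B(u)) = \ann_{G(B)}(u^*)$, which is where the strongly Teter hypothesis (Gorensteinness of $G(B)$ rather than merely of $B$) becomes decisive: the graded Gorenstein duality on $G(B)$ forces compatibility between annihilators and initial forms. Once this lifting is in place, successive replacements of $c$ produce the desired higher-order expression for $b$, completing the induction.
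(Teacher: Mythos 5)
Your skeleton matches the paper's: the kernel $M=\ker(B\rt A)$ is cyclic, generated by an element $u$ whose class in $B/\xb B$ spans the socle; for $d\geq 2$ one descends along a superficial element and invokes Sally's machine; and everything hinges on dimension one. But the proposal does not actually prove the dimension-one case, and that case is the entire content of the theorem. Note first that your induction effectively bottoms out at $d=1$, not $d=0$: when $d=1$ the quotient $G(A')$ is Artinian, so $\depth G(A')=0$ and Sally's machine gives nothing; the whole burden is to show directly that $x^*$ is a nonzerodivisor on $G(A)$ when $\dim A=1$. Your ``Main obstacle'' paragraph is a statement of intent rather than an argument: the pivotal claim --- that a relation $c^*u^*=0$ in $G(B)$ lifts to an honest relation $c_0u=0$ in $B$, i.e.\ that $\ann_{G(B)}(u^*)$ agrees with the initial ideal of $\ann_B(u)$ --- is exactly the hard point. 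Such compatibility between annihilators and initial forms fails for general filtered rings, and ``graded Gorenstein duality forces compatibility'' is an assertion, not a proof; nothing in your write-up isolates what the Gorensteinness of $G(B)$ (as opposed to that of $B$) actually buys you.

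The paper closes precisely this gap by a different mechanism, which you may want to compare. Setting $\F=\{\n^n\cap M\}$, one gets $0\rt G_\F(M)\rt G(B)\rt G(A)\rt 0$ with $G_\F(M)$ an ideal of $G(B)$. Take a primary decomposition $G_\F(M)=Q_1\cap\cdots\cap Q_s\cap L$ with $L$ the possible embedded $\M$-primary component (present exactly when $G(A)$ is not Cohen-Macaulay) and put $Q=Q_1\cap\cdots\cap Q_s$. A multiplicity count gives $e(G(B)/Q)=e(B)-1$, hence $Q$ has multiplicity one as a $G(B)$-module, hence $Q\cong k[Y](-c)$, and $c$ must equal the socle degree $r$ of $G(\ov{B})$ --- this is where the Gorensteinness of $G(B)$ enters, since it concentrates $\soc(G(\ov{B}))$ in the single top degree $r$. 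Since $u$ maps to a socle generator of $B/xB$ one also has $\ord_\n(u)\leq r$, and comparing initial degrees along $G_\F(M)\subseteq Q$ gives the sandwich $r\leq \ord_\n(u)\leq r$, forcing $G_\F(M)=Q=k[Y](-r)$ with no embedded component. The induced map $k[-r]\rt G(\ov{B})$ then sends the generator to the nonzero socle class, is injective, and annihilates $(0:_{G(A)}x^*)$, which is the desired regularity of $x^*$. To complete your proof you need either this primary-decomposition and multiplicity argument or a genuine proof of your annihilator-lifting claim; as written, the crux is assumed rather than established.
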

Theorem \ref{strong} can be used to give examples of rings which are Teter but not strongly Teter, see \ref{non-std-graded-2}.
We give an intrinsic characterization of \CM \ local domains which are strongly Teter.
\begin{theorem}
\label{main-strong} Let $(A,\m)$ be a  \CM \  local domain.  Assume $A$ is a quotient of a regular local ring $T$. Assume $A$ is not Gorenstein.
The following assertions are equivalent:
\begin{enumerate}[\rm (i)]
\item $A$ is a strongly Teter ring.
\item $A$ satisfies the following properties.
 \begin{enumerate}[\rm (a)]
 \item $G(A)$ is \CM.
 \item There exists a proper ideal $J$ of $A$ with $\omega_A \cong J$ and $\codim A/J \leq 1$.
 \item Consider the filtration $\F = \{ J_n  = J \cap \m^n \}_{n \geq 0}$ on $J$. Then $G_\F(J)$ is isomorphic to the graded canonical module of $G(A)$ (up to a shift).
 \end{enumerate}
\end{enumerate}
\end{theorem}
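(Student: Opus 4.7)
The strategy is to study how a Teter Gorenstein approximation $(B,\n)$ of $A$ interacts with the associated graded construction. Let $\pi \colon B \twoheadrightarrow A$ be such an approximation with $I = \ker \pi$. Because $\dim B = \dim A$ and $B$ is Gorenstein, we have $\omega_A \cong \Hom_B(A,B) \cong (0 :_B I)$ as $B$-modules. Since $A$ is a domain, the composition $(0 :_B I) \hookrightarrow B \twoheadrightarrow A$ is injective, so $\omega_A$ is realised as a proper ideal $J \subseteq A$, and under this identification the filtration $J_n = J \cap \m^n$ corresponds to the $\n$-adic filtration on $(0 :_B I) \subseteq B$. This setup is essentially the content of Theorem \ref{main}.

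For (i) $\Rightarrow$ (ii), parts (a) and (b) are direct consequences of Theorems \ref{strong} and \ref{main}. For (c), write $G(A) \cong G(B)/I^*$ where $I^* \subseteq G(B)$ is the initial-form ideal of $I$. Because $G(B)$ is Gorenstein and $G(A)$ is \CM \ of the same dimension, a standard computation gives $\omega_{G(A)} \cong (0 :_{G(B)} I^*)$ up to a degree shift. The plan is then to prove $(0 :_B I)^* = (0 :_{G(B)} I^*)$ as graded submodules of $G(B)$ (here $(0 :_B I)^*$ denotes the initial-form module of $(0 :_B I) \subseteq B$); combined with the filtration identification above, this yields $G_\F(J) \cong \omega_{G(A)}$ up to shift.

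For (ii) $\Rightarrow$ (i), apply Theorem \ref{main} to part (b) to produce the unique Gorenstein approximation $B$ of $A$; one must show $G(B)$ is Gorenstein. First, deduce that $G(B)$ is \CM \ from (a), using the filtered short exact sequence coming from $0 \to I \to B \to A \to 0$ and the control on $I$ afforded by $\codim A/J \leq 1$. Next, since $B$ is Gorenstein, $G(B)$ being Gorenstein is equivalent to $\omega_{G(B)}$ being free of rank one over $G(B)$; the hypothesis (c), transported through the $G(B)$-level description $\omega_{G(A)} \cong (0 :_{G(B)} I^*)$, provides such a generator via the graded identification of $J$ with $(0 :_B I)$.

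The core technical obstacle is the graded equality $(0 :_B I)^* = (0 :_{G(B)} I^*)$, a compatibility between annihilator and initial-form operations with respect to the $\n$-adic filtration on $B$. Establishing it requires both the Cohen-Macaulayness of $G(A)$ and the precise shift-match in (c); running it in reverse is what upgrades $G(B)$ from Cohen-Macaulay to Gorenstein in (ii) $\Rightarrow$ (i). A secondary hurdle is verifying $G(B)$ is \CM \ in (ii) $\Rightarrow$ (i), which depends on the detailed structure of $J$ (in particular $\codim A/J \leq 1$) and the uniqueness statement of Theorem \ref{main}.
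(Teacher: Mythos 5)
Your proposal is an outline that correctly locates the difficulty but does not resolve it, and the unresolved steps are the actual content of the theorem. Three assertions are left unproved. First, you claim the filtration $\{J \cap \m^n\}$ on $J$ ``corresponds to'' the $\n$-adic filtration on $(0:_B I) \subseteq B$; this is not automatic, since an element of $J \cap \m^n$ lifts only to $(0:_B I) \cap (\n^n + I)$, not obviously to $(0:_B I)\cap \n^n$. Second, your core identity $(0:_B I)^* = (0:_{G(B)} I^*)$: the inclusion $\subseteq$ is formal, but the reverse inclusion is exactly the kind of statement that fails for general ideals in general filtered rings, and you give no argument for it --- you only say ``the plan is then to prove'' it. Moreover your own description of how to prove it is circular across the two directions: in (i)$\Rightarrow$(ii) you cannot invoke the shift-match of (c), since that is what you are trying to establish. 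Third, in (ii)$\Rightarrow$(i) the claim that $G(B)$ is \CM \ is waved at (``using the filtered short exact sequence \ldots and the control on $I$''); for the sequence $0 \rt G_{\mathcal{H}}(\ker q) \rt G(B) \rt G(A) \rt 0$ of initial-form modules to be exact and to have \CM \ ends, one needs to actually compute the induced filtration on the kernel.

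The paper closes all three gaps by a route you do not take: it reduces to $\dim A = 1$ (superficial sequences plus Sally descent, after which $Q$ is a DVR), and then uses the fiber-product presentation $B = A\times_{Q/(a)} Q$ to compute the induced filtration $\Hc_n = j^{-1}(\n^n)$ on the kernel $Q$ explicitly --- the observation that $(0,x^n) \in \n^n$ gives $\Hc_n = (x^{n-r})$, hence $G_\Hc(Q) \cong k[X](-r)$. This single computation yields that $G(B)$ is \CM, that $G_{\F'}(\omega_A) \cong G_\F(J)$ via the Snake Lemma (your first gap), and, after passing modulo a degree-one regular element, the equality $\soc(G_\F(J)/x^*G_\F(J)) = \soc(G(\ov{B}))$, which proves both implications at once: $G(B)$ is Gorenstein iff that socle is $k$ iff $G_\F(J)$ is the canonical module of $G(A)$ up to shift. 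If you want to salvage your approach, you should first carry out the reduction to dimension one and then replace the abstract identity $(0:_B I)^* = (0:_{G(B)} I^*)$ with this concrete filtration computation; as written, the proposal does not constitute a proof.
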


In \cite{HV}, the authors stated that rings of utmost countable representation type should be close to Gorenstein rings. We prove this for homogeneous rings of finite representation type.  Recall that a graded algebra over a field has finite representation type if it has up to isomorphisms and shifts in the grading, only a finite number of indecomposable maximal Cohen-Macaulay modules.
\begin{theorem}\label{fin-rep}
Let $k$ be an algebraically closed field of characteristic zero. Let $R$ be a standard \CM \
$k$-algebra with maximal homogeneous ideal $\M$. Assume $R$ is NOT Gorenstein. Further assume $R$ is of finite \CM \ type. Let $A = \wh{R_\M}$ be its completion \wrt \ $\M$. Then $A$ is a Teter ring.
\end{theorem}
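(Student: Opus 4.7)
The plan is to apply Theorem \ref{main}: it suffices to verify that $A$ is a \CM \ local domain and to exhibit a proper ideal $J$ of $A$ with $\omega_A \cong J$ and $\codim A/J \leq 1$. I would first dispense with the domain property. Over an algebraically closed field of characteristic zero, a standard graded \CM \ $k$-algebra of finite representation type is known to be a normal graded domain; this follows from the classification of such rings (Auslander, Eisenbud--Herzog, Buchweitz--Greuel--Schreyer, and others). Since normality and the domain property are preserved by localization at $\M$ and by $\M$-adic completion, $A = \wh{R_\M}$ is a \CM \ local domain, and hypothesis of Theorem \ref{main} is in force.

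Next, the existence of the ideal $J$ is to be obtained from the explicit structural description of $\omega_R$ available in each case of the classification of non-Gorenstein standard graded \CM \ $k$-algebras of finite \CM \ type. The families that arise are quite restricted---Veronese subrings of polynomial rings, cones over rational normal scrolls, certain Segre products and small determinantal rings, and graded quotient singularities $k[x_1,\ldots,x_d]^G$ with $G \sub GL_d(k)$ small and finite. In each family the canonical module admits an explicit graded presentation: for invariant rings Watanabe's theorem identifies $\omega_R$ with the $\det^{-1}$-isotypic component, and for Veronese and scroll rings $\omega_R$ is a shift of the ring or of a standard module. From each such presentation one reads off an ideal $J \sub R$ isomorphic to $\omega_R$ whose quotient $R/J$ has embedding codimension at most one, and after completion this yields the ideal required by Theorem \ref{main}(ii).

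The principal obstacle is the uniform verification of the bound $\codim A/J \leq 1$ across the classification. This is most delicate for the quotient singularity case, where one must argue that the generators of the $\det^{-1}$-isotypic component, together with the remaining minimal generators of $\m$, cut out a subring of $R$ whose embedding codimension exceeds its dimension by at most one; this amounts to a careful invariant-theoretic count of generators versus linear syzygies in each small subgroup type. A cleaner, classification-free route would be to construct the Teter Gorenstein approximation $B$ directly from an endomorphism-ring construction on a well-chosen \CM \ module provided by finite representation type; however, forcing the multiplicity jump $e(B)-e(A)$ to be exactly $1$ appears to require the numerical information that the classification supplies, so at least some case analysis seems unavoidable.
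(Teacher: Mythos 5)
Your overall strategy --- reduce to Theorem \ref{main} via the Eisenbud--Herzog classification and exhibit an explicit ideal $J \cong \omega_R$ with $\codim R/J \leq 1$ in each case --- is exactly the paper's strategy. But there is a genuine error in your first step: the claim that every standard graded \CM \ $k$-algebra of finite representation type (non-Gorenstein) is a normal domain is false. The one-dimensional entry in the Eisenbud--Herzog list is $k[X,Y,Z]/(XY,XZ,YZ)$, which is reduced but has three minimal primes; it is not a domain. So you cannot invoke Theorem \ref{main} as stated, since its hypothesis is that $A$ is a domain. The paper sidesteps this by proving the implication (ii) $\Rightarrow$ (i) in a more general form (Theorem \ref{fiber}), which requires only that $A$ be \CM \ with $\omega_A$ isomorphic to a proper ideal $J$ satisfying $\codim A/J \leq 1$ --- no domain hypothesis. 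For the one-dimensional case the paper's Proposition \ref{dim-1-minmult} only needs $R$ to be generically Gorenstein (which the reduced ring of three axes is), which suffices to embed $\omega_R$ as an ideal. You should replace your appeal to Theorem \ref{main} by an appeal to Theorem \ref{fiber}.

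The second issue is that the substantive content of the proof --- the verification of $\codim A/J \leq 1$ case by case --- is precisely what you defer as ``the principal obstacle,'' so the argument is incomplete where it matters. Your description of the classification also overstates its breadth: in the non-Gorenstein case there are only four families ($k[X,Y,Z]/(XY,XZ,YZ)$ in dimension one; $k[X,Y]^{<m>}$, $m \geq 3$, in dimension two; $k[X,Y,Z]^{<2>}$ and one scroll $V$ in dimension three), so there is no delicate ``invariant-theoretic count over small subgroup types'' to perform. The paper handles each family with a short explicit computation: for the Veronese rings, $\omega_R \subseteq R$ with $\dim_k(\omega_R)_1$ just small enough that $R/\omega_R$ has embedding dimension $\dim R/\omega_R + 1$; for the two three-dimensional rings, it realizes them as integer matrix rings and multiplies an explicit degree component representing $\omega_R$ into the ring. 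Without these verifications, and with the domain claim as stated, the proposal does not yet constitute a proof.
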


We now describe in brief the contents of this paper. In section two we prove Theorem \ref{main}. In section three we give proof of Theorem \ref{strong}. In section four we give a proof of Theorem \ref{main-strong}. Finally in section five we give examples
and prove Theorem \ref{fin-rep}.

\section{Proof of Theorem \ref{main}}
In this section we give a proof of Theorem \ref{main}. We first prove
\begin{proof}[Proof of (i) $\implies$ (ii)]
Let $\dim A = d$.
As $A$ is a Teter ring  there exists a $d$-dimensional  Gorenstein local ring $B$ mapping onto $A$  with $e(B) - e(A) = 1$. We have an exact sequence $$0 \rt M \xrightarrow{g} B \xrightarrow{f} A \rt 0, \quad \text{where $e(M) = 1$.}$$
We note that $M$ is a MCM $B$-module with $e(M) = 1$. Dualizing the above exact sequence we obtain an exact sequence
\[
0 \rt \omega_A \xrightarrow{f^\vee} B \xrightarrow{g^\vee} M^\vee \rt 0.
\]
We note that $M^\vee$ is a \CM \ ring of dimension $d$. As $ e(M^\vee) = e(M) = 1$, it follows that $M^\vee$ is a regular local ring. Denote it by $Q$.
Consider the map $\omega_A \xrightarrow{f\circ f^\vee} A$ and let $K$ be the kernel of this map. Set $V = \omega_A/K$ and let $i^\prime \colon V \rt A$ be the induced injection.
We have the following commutative diagram:
\[
  \xymatrix
{
\
 \
  & 0
\ar@{->}[d]
 & 0
\ar@{->}[d]
& 0
\ar@{->}[d]
& \
\\
0
 \ar@{->}[r]
  & N
\ar@{->}[r]
\ar@{->}[d]
 & M
\ar@{->}[r]
\ar@{->}[d]^{g}
& Q^{\prime \prime}
\ar@{->}[r]
\ar@{->}[d]^{j}
&0
\\
 0
 \ar@{->}[r]
  & \omega_A
\ar@{->}[r]^{f^\vee}
\ar@{->}[d]^{\pi}
 & B
\ar@{->}[r]^{g^\vee}
\ar@{->}[d]^{f}
& Q
\ar@{->}[r]
\ar@{->}[d]
&0
\\
 0
 \ar@{->}[r]
  &V
\ar@{->}[r]^{i^\prime}
\ar@{->}[d]
 & A
\ar@{->}[r]^{p^\prime}
\ar@{->}[d]
& Q^\prime
    \ar@{->}[r]
    \ar@{->}[d]
    &0
\\
\
\
  & 0
\
 & 0
\
& 0
    \
    & \
\
 }
\]
We  first claim that either $N$ or $Q^{\prime \prime}$ is zero. Suppose if possible both of them are non-zero. We note that as associate primes of $\omega_A$ have dimension $d$ it follows that $\dim N = d$.
Similarly $\dim Q^{\prime \prime} = d$. It follows that the modified multiplicity $e(N), e(Q^{\prime\prime})$ are positive. As dimensions of $N, M, Q^{\prime\prime}$ is $d$ we get by \cite[4.7.7]{BH};
$e(N) + e(Q^{\prime\prime}) = e(M) = 1.$ This is a contradiction

Suppose if possible $Q^{\prime \prime} = 0$. Then $Q^\prime = Q$. We note that $Q = M^\vee \neq 0$. We note that $Q$   is a $d$-dimensional  quotient of $A$ which is not $A$ (as $A$ is NOT regular). This is a contradiction as $A$ is a domain.
So $N = 0$.  Thus $V = \omega_A$. We note that $M = Q^\prime $ is a MCM $Q$-module with multiplicity  one. So $Q^{\prime \prime} \cong Q$. Thus $Q^\prime = Q/(a)$ for some $a$. If $Q^\prime = 0$ then note that $\omega_A \cong A$. This forces $A$ to be Gorenstein, a contradiction. So $Q^\prime$ is non-zero and note that $\codim Q^\prime \leq 1$. The result follows.
\end{proof}

We have to prove a slightly more general version of the assertion (ii) $\implies$ (i) of Theorem \ref{main}.
\begin{theorem}
\label{fiber} Let $(A,\m)$ be a \CM \ local ring of dimension $d \geq 1$. Assume $A$ is not Gorenstein. Assume $A$ is a quotient of a regular local ring $T$. Suppose the canonical module $\omega_A$ of $A$ is isomorphic to a proper ideal $J$ of $A$ such that
$\codim A/J \leq 1$. Then $A$ is a Teter ring.
\end{theorem}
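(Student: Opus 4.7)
The plan is to construct a Teter Gorenstein approximation $B$ of $A$ as an explicit fiber product. Using the hypothesis $\codim A/J \leq 1$, one may write $A/J \cong Q/(a)$ where $Q$ is a regular local ring of dimension $d = \dim A$ and $a \in \m_Q$ (allowing $a = 0$ when $A/J$ is itself regular). Since $A$ and $Q$ both surject onto $A/J$, I form the fiber product $B := A \times_{A/J} Q$, a local Noetherian ring with two canonical surjections $B \twoheadrightarrow A$ and $B \twoheadrightarrow Q$. The two projections yield the $B$-module short exact sequences
\[
0 \rt Q \rt B \rt A \rt 0 \quad \text{and} \quad 0 \rt J \rt B \rt Q \rt 0,
\]
where in the first, the kernel $0 \times (a)$ is identified with $Q$ as a $B$-module (using that $Q$ is a domain when $a \neq 0$; the $a=0$ case is handled separately), and in the second, the kernel $J \times 0$ is identified with the ideal $J$ of $A$.

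I would first verify the basic numerical invariants: $\dim B = d$, $B$ is \CM, and $e(B) = e(A) + 1$. The multiplicity statement follows from the first sequence above combined with additivity of multiplicity on short exact sequences of MCM modules of full dimension, together with $e_B(Q) = 1$. That $B$ is a quotient of some regular local ring can be arranged by combining the ambient presentations of $A$ and $Q$ into a single regular local ring.

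The heart of the proof is showing $B$ is Gorenstein, and my plan is a canonical-module computation. Dualizing the first exact sequence (via $\Hom_B(-, \omega_B)$, using that $Q$ is a MCM $B$-module) produces
\[
0 \rt \omega_A \rt \omega_B \rt \omega_Q \rt 0,
\]
and $\omega_Q \cong Q$ because $Q$ is regular. On the other hand, the second short exact sequence above, together with $J \cong \omega_A$, reads $0 \to \omega_A \to B \to Q \to 0$. Both $\omega_B$ and $B$ thus fit as extensions of $Q$ by $\omega_A$, and my goal is to show the corresponding classes in $\Ext^1_B(Q, \omega_A)$ coincide, forcing $\omega_B \cong B$ and the Gorenstein property.

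I expect the identification of extension classes to be the main obstacle. It should follow from the explicit fiber-product description of $B$, since both extensions encode the same gluing data of $A$ and $Q$ over $A/J = Q/(a)$; a careful diagram chase leveraging the isomorphism $\omega_A \cong J$ should confirm this. Once the Gorenstein property of $B$ is established, the construction yields the required Teter Gorenstein approximation of $A$.
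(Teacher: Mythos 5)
Your construction of $B = A \times_{A/J} Q$, the two short exact sequences, the Cohen--Macaulayness of $B$, and the multiplicity count $e(B) = e(A)+1$ all agree with the paper's argument. One small point before the main one: since $e(J)=e(\omega_A)=e(A)$, additivity of multiplicities forces $\dim A/J \le d-1$, so once you insist that $\dim Q = d$ the element $a$ is automatically nonzero; the case $a=0$ that you propose to "handle separately" cannot occur, and must not, since it would give $B\cong A$ and destroy the multiplicity count.

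The genuine gap is the Gorenstein step. You reduce it to showing that $0 \rt \omega_A \rt B \rt Q \rt 0$ and $0 \rt \omega_A \rt \omega_B \rt Q \rt 0$ give "the same" class in $\Ext^1_B(Q,\omega_A)$, and you defer this to "a careful diagram chase." This is not a routine chase. Applying $\Hom_B(-,\omega_A)$ to the first sequence identifies $\Ext^1_B(Q,\omega_A)$ with $\Hom_A(\omega_A,\omega_A)/\image(\omega_A) \cong A/J$, which is in general far from one-dimensional; the class of $B$ is the image of $1\in A$, and you would have to prove that the class of $\omega_B$ is a \emph{unit} multiple of it. Nothing in the fiber-product data hands you this: two extensions with isomorphic middle terms need not have proportional classes, and conversely the assertion that the classes agree up to a unit is essentially equivalent to $\omega_B\cong B$, the statement you are trying to prove, so the argument as sketched is close to circular. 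The paper avoids all of this with a socle computation: choose a maximal regular sequence $\xb=x_1,\dots,x_d$ that is regular on $B\oplus\omega_A\oplus Q\oplus A$ and whose image generates the maximal ideal of $Q$. Reducing $0 \rt \omega_A \rt B \rt Q \rt 0$ modulo $\xb$ gives $0 \rt \omega_A/\xb\omega_A \rt B/\xb B \rt k \rt 0$; a socle element of $B/\xb B$ must map to $0$ in $k$ (otherwise it would be a unit), hence lies in $\omega_A/\xb\omega_A$, which is the canonical module of the Artinian ring $A/\xb A$ and therefore has one-dimensional socle. Thus $B/\xb B$, and hence $B$, is Gorenstein. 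I recommend replacing your $\Ext$-class comparison with this argument; the rest of your proposal then goes through.
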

\s \label{rem-gen}Before proving the above result we make the following remarks:
\begin{enumerate}
  \item The ring $A/J$ is a hypersurface ring (or regular local). We note that it suffices to show $A/J$ is \CM. To prove this note that $\depth A/J \geq d -1$. As $e(J) = e(\omega) = e(A)$, it follows from \cite[4.7.7]{BH} that $\dim A/J \leq d - 1$. The assertion follows.
  \item Note that $\omega_A  \cong J$ has a rank as $A/J$ is a torsion $A$-module. It follows that $A$ is generically Gorenstein, see \cite[3.3.18]{BH}.
  \item  $A/J = Q/(a)$ for some regular local ring $Q$ of dimension $d$. This follows from the fact that $A/J$ is a  local ring which has utmost hypersurface singularity is a quotient of a regular local ring $T$.
\end{enumerate}
We now give
\begin{proof}[Proof of Theorem \ref{fiber}]
  By \ref{rem-gen} there exists a complete regular local ring $Q$ of dimension $d$ such that $A/J = Q/(a)$ for some non-zero $a$. Let $B$ be the fiber-product of $A \rt Q/(a) = A/J$ and $Q \rt Q/(a)$. It is routine to verify that the induced homomorphisms $p \colon B \rt Q$ and $q \colon B\rt A$ are surjective. Also notice $B$ is local.
  We have the following commutative diagram
  \[
  \xymatrix
{
 0
 \ar@{->}[r]
  & \omega_A
\ar@{->}[r]^{i}
\ar@{->}[d]^{f}
 & B
\ar@{->}[r]^{p}
\ar@{->}[d]^{q}
& Q
\ar@{->}[r]
\ar@{->}[d]^{\pi^\prime}
&0
\\
 0
 \ar@{->}[r]
  & \omega_A
\ar@{->}[r]^{i^\prime}
 & A
\ar@{->}[r]^{\pi}
& Q/(a)
    \ar@{->}[r]
    &0
\
 }
\]
We note that $f$ is an isomorphism, \cite[Chapter III, 5.4]{K}.
As $A, Q$ are quotient of $T$, so $B$ is also a quotient of $T$.
As we have an exact sequence $0  \rt \omega_A \rt B \xrightarrow{p} Q \rt 0$ is exact it follows that $B$ is \CM \ of dimension $d$. We may by using prime avoidance construct a sequence $\xb = x_1, \ldots, x_d$  which is an $B \oplus \omega_A \oplus Q \oplus A$   regular sequence, such that images of $\xb$ generate the maximal ideal of $Q$. Going mod $\xb$ we obtain an exact sequence
$$ 0 \rt \omega_A/\xb \omega A \rt B/\xb B \xrightarrow{\ov{p}} k \rt 0.$$
If $t \in \soc(B/\xb B)$ then $\ov{p(t)} = 0$. It follows that $\soc(B/\xb B) \cong \soc(\omega_A/\xb \omega_A) \cong k$ as $\omega_A/\xb \omega_A $ is the canonical module of $A/\xb A$. Thus $B/\xb B$ is a Gorenstein ring and hence $B$ is a Gorenstein ring. We note that $\ker (B \xrightarrow{q} A) \cong \ker (Q \rt Q/(a)) = Q$. This we have an exact sequence
$0 \rt M \rt B \rt A \rt 0$ where $M \cong Q$. So $e(M) = 1$. Thus $e(B) = e(A) + 1$. It follows that $A$ is a Teter ring.
\end{proof}

We now give
\begin{proof}[Proof of Theorem \ref{main} continued] We already have proved (i) $\implies$ (ii). The assertion (ii) $\implies$ (i) follows from Theorem \ref{fiber}.
We now have to prove uniqueness of the Teter Gorenstein approximation of $A$.
Set $C = A\times_{Q/(a)}Q$ the fiber-product and let $j \colon C \rt A$ and $l \colon C \rt Q$ be the natural maps. By the proof of (ii) $\implies$ (i) we get that $C$ is a Teter Gorenstein approximation of $A$. We prove $B \cong C$.
 By our proof of (i) $\implies$ (ii) we obtain a commutative diagram
  \[
  \xymatrix
{
 0
 \ar@{->}[r]
  & \omega_A
\ar@{->}[r]^{i}
\ar@{->}[d]^{g}
 & B
\ar@{->}[r]^{p}
\ar@{->}[d]^{q}
& Q
\ar@{->}[r]
\ar@{->}[d]^{\pi^\prime}
&0
\\
 0
 \ar@{->}[r]
  & \omega_A
\ar@{->}[r]^{i^\prime}
 & A
\ar@{->}[r]^{\pi}
& Q/(a)
    \ar@{->}[r]
    &0
\
 }
\]
where $g$ is an isomorphism and $p,q$ are surjective.  By the universal property of fiber products we have a ring map $\psi \colon B \rt C$ such that $p = l\circ \psi$ and $ q = j \circ \psi$.
So we have a commutative diagram
  \[
  \xymatrix
{
 0
 \ar@{->}[r]
  & \omega_A
\ar@{->}[r]^{i}
\ar@{->}[d]^{u}
 & B
\ar@{->}[r]^{p}
\ar@{->}[d]^{\psi}
& Q
\ar@{->}[r]
\ar@{->}[d]^{1_Q}
&0
\\
 0
 \ar@{->}[r]
  & \omega_A
\ar@{->}[r]^{i^\prime}
 & C
\ar@{->}[r]^{\pi}
& Q
    \ar@{->}[r]
    &0
\
 }
\]
We note that $g = f \circ u$ where $f \colon \omega_A \rt \omega_A$ is the isomorphism induced by the fiber-product diagram. As $g$ is an isomorphism it follows that $u$ is an isomorphism.
 So $\psi$ is an isomorphism. The result follows.
\end{proof}

\section{Proof of Theorem \ref{strong}}
In this section we give
\begin{proof}[Proof of Theorem \ref{strong}]
Let $(B, \n)$ be a Teter Gorenstein approximation of $A$. So we have an exact sequence
$0 \rt M \rt B \rt A \rt 1$ where $M$ is a MCM $B$-module of multiplicity one. We note that we may assume that the residue field of $B$ is infinite (otherwise we may consider $B^\prime = B[X]_{\n B[X]}$ and $A^\prime = A\otimes_B B^\prime$).

We induct on dimension $d$ of $A$. When $d = 0$ we have nothing to prove.

The case when $d = 1$ is the most important case. Suppose if possible $G(A)$ is \emph{not} \CM.

 Consider the filtration $\F = \{ \n^n \cap M \}_{n \geq 0}$ on $M$. We note that $\F$ is an $\n$-stable filtration on $M$. We have an exact sequence of $G(B)$-modules
$$ 0 \rt G_\F(M) \rt G(B) \rt G(A) \rt 0.$$
It follows that $G_\F(M)$ is \CM. Let $x$ be $B \oplus A$-superficial  and $M$-superficial \wrt \ $\F$.
We note that $e(M) = \ell(M/xM) = 1$. So $xM = \m M$ and $\mu(M) = 1$. We shift $\F$ to get a new filtration $\G = \{\G_n \}_{n \geq 0}$  of $M$ such that $\G_0 = M$ and $\G_1 \neq 1$.
We note that $G_\G(M) $ is \CM, say its $h$-polynomial is $a_0 + a_1 z + \cdots + a_s z^s$. Then as $G_\G(M)$ is \CM \  we get $a_i \geq 0$. Also $a_0 \neq 0$ by construction. As $e(G_\G(M)) = 1$ it follows that $a_0 = 1$ and $a_i = 0$ for $i > 0$. Thus $e_1(G_\G(M)) = 0$.
It follows that $G_\G(M) \cong k[Y]$. It follows that $G_\F(M) = k[Y](-a)$. Let $\alpha$ be the generator of $M$. Say image of $\alpha $ in $B$  is $\xi$. By going mod a $B \oplus A$ and $M$-superficial element \wrt \ $\F$ we obtain an exact sequence $0 \rt k \rt \ov{B} \rt \ov{A} \rt 0$. It follows that $\ov{\alpha}$ maps to $\delta$, a generator of $\soc(\ov{B})$. Say $\soc(\ov{B}) = \n^r$.  Thus $\xi = \delta + \theta x$ where $\theta \in B$. So $\ord_\n(\xi) \leq r$. So $a \leq r$.

If $J$ is a non-zero homogeneous ideal of $G(B)$ set $$i(J) = \min \{ \deg t \mid t \in J \ \text{where} \ t \ \text{is non-zero and homogeneous }\}.$$
We note that if $J \subseteq J^\prime$ then $i(J^\prime) \leq i(J)$. We note that $i(G_\F(M)) = a$.

Consider a primary decomposition of $G_\F(M)$ as an ideal in $G(B)$. As $G(A)$ is not \CM \ we have
$$ G_\F(M)  = Q_1\cap Q_2 \cap \cdots \cap Q_s \cap L, $$
where $L$ is $\M$-primary (here $\M$ is the unique homogeneous ideal of $G(B)$)  and $Q_i$ is $P_i$-primary where $P_i$ is some minimal prime of $G(B)$.
Set $Q = Q_1 \cap Q_2 \cap \cdots \cap Q_s$. Then $Q$ is a \CM \ ideal in $G(B)$ and $Q \supseteq G_\F(M)$.
We have an exact sequence
$$ 0 \rt G(B)/G_\F(M) \rt G(B)/Q \oplus G(B)/L \rt G(B)/(L + Q) \rt 0.$$
As $L$ is a zero-dimensional ideal of $G(B)$ we obtain $e(G(B)/Q) = e(G(A)) = e(B) -1$. It follows that $e(Q)$ (as a $G(B)$-module ) is one. We have an exact sequence of MCM $G(B)$-modules
$ 0 \rt Q \rt G(B) \rt G(B)/Q \rt 0$. Going mod a regular element $x^*$ of degree one we obtain a sequence $0 \rt Q/x^*Q \rt G(\ov{B}) \rt G(B)/(Q + x^*G(B)) \rt 0$.
As $e(Q) = 1$ it follows that $Q/x^*Q = k[-c]$ for some $c$. We note that the degree of socle of $G(\ov{B})$ is also $\ord_{\ov{B}}(\soc(\ov{B})) = r$. So $c = r$. Thus $Q = k[Y](-r)$. Thus $i(Q) = r$. As $Q \supseteq G_\F(M)$ we obtain $ r = i(Q) \leq i(G_\G(M)) = a \leq r$. So $i(G_\F(M)) = r$. Thus $G_\F(M) = k[Y](-r)$.

We now consider the exact sequence $ 0 \rt G_\F(M) \rt G(B) \rt G(A) \rt 0$. Going mod $x^*$ where $x$ is $B \oplus A $ and $M$-superficial \wrt \ $\F$ we obtain a sequence
$$ 0 \rt (0 \colon_{G(A)} x^*) \rt k[-r] \xrightarrow{f} G(\ov{B}) \rt G(A)/x^*G(A) \rt 0.$$
We note that  $f([\alpha]) = [\delta] \neq 0$. Thus $f$ is injective and hence $(0 \colon_{G(A)} x^*) = 0$. So $G(A)$ is \CM.

Now assume $\dim A = d \geq 2$. Let $ 0 \rt M \rt B \rt A \rt 0$. Let $\xb = x_1,\ldots, x_{d-1}$ be a $A\oplus B \oplus M$-superficial sequence.
So we have an exact sequence $ 0 \rt \ov{M} \rt \ov{B} \rt \ov{A} \rt 0$. We note $e(\ov{M}) = 1$ and $G(\ov{B})$ is Gorenstein. So by above argument $G(\ov{A})$ is \CM. By Sally descent, see \cite[2.2]{HM},  $G(A)$ is \CM.
\end{proof}

\section{Proof of Theorem \ref{main-strong}}
In this sections we give
\begin{proof}[Proof of Theorem \ref{main-strong}.]
We may assume that the residue field of $T$ is infinite. Otherwise consider the extension $T \rt T[X]_{\n T[X]}$. The hypotheses and conclusion of the theorem are invariant under this transformation.

Let $d = \dim A \geq 1$.
As $A$ is a Teter and a domain, by Theorem \ref{main} we have there exists a proper ideal $J$ of $A$ with $J \cong \omega_A$ and $\codim A/J \leq 1$. Furthermore $A/J = Q/(a)$ where $Q$ is a regular local ring of dimension $d$ and $a \neq 0$. By Theorem \ref{main}  we get $B = A\times_{Q/(a)}Q$ is the unique (up to isomorphism)  Teter Gorenstein approximation of $A$.

By \ref{strong} we may assume that  $G(A)$ is \CM. We note that by the exact sequence $0 \rt J \rt A \rt Q/(a)  \rt 0$ we obtain an exact sequence
$0 \rt G_\F(J) \rt G(A) \rt G(Q)/(a^*) \rt 0$. So $G_\F(J)$ is \CM.

Suppose $d \geq 2$. Then we choose $\xb = x_1, \ldots, x_{d-1}$ which is $Q \oplus B \oplus A$ and $\omega_A$ superficial \wrt \  $\F$. Set $\ov{(-)} = (-)\otimes_T T/(\xb)$. Then note $G(B)$ is Gorenstein if and only if $G(\ov{B})$ is Gorenstein. Furthermore $G_\F(\omega)$ is isomorphic to the canonical module of $G(A)$ (up to shift) if and only if $G_{\ov{\F}}(\ov{\omega})$ is isomorphic to the canonical module of $G(\ov{A})$ (up to shift). We also get $\ov{B} \cong \ov{A}\times_{Q/(a, \xb)} \ov{Q}$. However note $\ov{A}$ need not be a domain. But this is not pertinent to our result (the only requirement is that $\ov{B}$ is an appropriate fiber-product).
Thus we may assume that $d = 1$.
Consider the fiber-product exact diagram

\[
  \xymatrix
{
 0
 \ar@{->}[r]
  & Q
\ar@{->}[r]^{j}
\ar@{->}[d]^{g}
 & B
\ar@{->}[r]^{q}
\ar@{->}[d]^{p}
& A
\ar@{->}[r]
\ar@{->}[d]^{\pi}
&0
\\
 0
 \ar@{->}[r]
  & Q
\ar@{->}[r]^{j^\prime}
 & Q
\ar@{->}[r]^{\pi^\prime}
& Q/(a)
    \ar@{->}[r]
    &0
\
 }
\]
We note $g$ is an isomorphism.

Let $\n$ be the maximal ideal of $B$. As $Q$ is a DVR we have $\n Q = (x)$. We note $j^\prime $ is multiplication by $x^r$ for some $r \geq 1$.
On $Q$ (on the left hand side) we have two filtration's.   First set $\Hc = \{ j^{-1}(\n^n) \}_{n \geq 0}$ and $\Hc^\prime = \{ (j^\prime)^{-1}((x^n)) \}_{n \geq 0}$. Furthermore we have a natural injective map $\Hc_n \rt \Hc^\prime_n$ for all $n \geq 0$. It is evident that $\Hc^\prime = \{ (x)^{n-r} \}_{n\geq 0}$ (here $(x)^l = Q$ if $l \leq 0$).
Note that $\n  = \m\times_{(x)/(x^r)} (x)$. Let $n \geq r$.
We note that $(0, x^n) \in \n^n$. So by a diagram chase we obtain $\Hc_n \supseteq (x^{n-r})$ As $\Hc_n \subseteq \Hc^\prime_n = (x^{n-r})$. It follows that $\Hc_n = (x^{n-r})$ for all $n\geq r$.
For $0 \leq n \leq r -1$ we note that $\Hc_n \supseteq \Hc_r = Q$. So $\Hc_n = Q$ for $n \leq r.$ It follows that $\Hc \cong \Hc^\prime$.

We note that $G_\Hc(Q) = k[X](-r)$. We also have an exact sequence $0 \rt G_\Hc(Q) \rt G(B) \rt G(A) \rt 0$. As $G(A)$ is \CM \ we obtain that $G(B)$ is \CM.

We note that we have an exact sequence $ 0  \rt\omega_A \rt B \xrightarrow{p} Q \rt 0$. Set $\F^\prime = \{\omega_A \cap \n^n \}_{n \geq 0}$. We also have an exact sequence $0 \rt J \rt A \rt Q/(a) \rt 0$, with $J \cong \omega_A$ and the filtration $\F = \{ \m^n \cap J \}_{n\geq 0}$. We had that $G_\F(J)$ is \CM. We have a commutative diagram

\[
  \xymatrix
{
\
 \
  & \
 & 0
\ar@{->}[d]
& 0
\ar@{->}[d]
& \
\\
\
  & \
 & G_\Hc(Q)
\ar@{->}[r]^{G(g)}
\ar@{->}[d]^{G(j)}
& G_{\Hc^\prime}(Q)
\ar@{->}[d]^{G(j^\prime)}
&\
\\
 0
 \ar@{->}[r]
  & G_{\F^\prime}(\omega_A)
\ar@{->}[r]
\ar@{->}[d]^{\psi}
 & G(B)
\ar@{->}[r]^{G(p)}
\ar@{->}[d]^{G(q)}
& G(Q)
\ar@{->}[r]
\ar@{->}[d]^{G(\pi^\prime)}
&0
\\
 0
 \ar@{->}[r]
  &G_\F(J)
\ar@{->}[r]^{i^\prime}
 & G(A)
\ar@{->}[r]^{G(\pi)}
\ar@{->}[d]
& G(Q)/(a^*)
    \ar@{->}[r]
    \ar@{->}[d]
    &0
\\
\
\
  & \
\
 & 0
\
& 0
    \
    & \
\
 }
\]
We had earlier shown that $\Hc^\prime \cong \Hc$. So $G(g)$ is an isomorphism. It follows by Snake Lemma that $\psi$ is an isomorphism.
Thus we have an exact sequence $0 \rt G_\F(J) \rt G(B) \rt G(Q) \rt 1$. By going $x^*$ which is $G(B)$-regular and of degree one we obtain an exact sequence
$$ 0 \rt G_\F(J)/x^*G_\F(J) \rt G(\ov{B}) \rt k \rt 0.$$
It follows that $\soc(G_\F(J)/x^*G_\F(J)) = \soc(G(\ov{B}))$. We note that $G(B)$ is Gorenstein if and only if $\soc G(\ov{B}) = k$. We also have $G_\F(J)$ isomorphic to the canonical module of $G(A)$ upto shift if and only if  $\soc(G_\F(J)/x^*G_\F(J)) = k$. The result follows
\end{proof}

\section{Examples and proof of Theorem \ref{fin-rep}}
In this section we give several examples which illustrate our results. We also give a proof of Theorem \ref{fin-rep}. Eisenbud and Herzog gave a complete list of homogeneous \CM \ rings over an algebraically closed field $k$ of characteristic zero which are of finite representation type. Our proof of Theorem \ref{fin-rep} is by a case by case analysis of the examples of Eisenbud and Herzog.

We first show
\begin{proposition}
\label{dim-1-minmult} Let $R = \bigoplus_{n \geq 0}R_n$ be a standard algebra over an infinite field $k = R_0$. We assume $\dim R = 1$ and that $R$ is generically Gorenstein. Assume $R$ is \CM \ with minimal multiplicity and that $R$ is not Gorenstein. Let $\M$ be the maximal homogeneous ideal of $R$. Then $A = \wh{R_\M}$ is a strong Teter ring.
\end{proposition}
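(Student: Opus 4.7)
The plan is to verify conditions (a), (b), (c) of Theorem \ref{main-strong} directly from the graded structure of $R$ and then to re-run the $d = 1$ portion of the proof of that theorem. The domain hypothesis in Theorem \ref{main-strong} is used only to invoke Theorem \ref{main} for the construction of the ideal $J$; once $J$ is produced by hand, the remaining argument is independent of it.

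Since $R$ is a \CM \ standard graded $k$-algebra of dimension one with minimal multiplicity over the infinite field $k$, a general degree-one $x$ satisfies $\M^2 = x\M$, so $x^\ast$ is a non-zerodivisor in $G(A) \cong R$. Hence $G(A)$ is \CM \ with $h$-polynomial $1 + (e - 1)t$ where $e = e(R)$, verifying (a). Stanley's reciprocity gives $H(\omega_R, t) = (e - 1 + t)/(1 - t)$, so $\omega_R$ is generated in degree zero by $e - 1$ elements. Since $R$ is generically Gorenstein, $\omega_R$ has rank one and embeds, after an appropriate graded shift, as a graded ideal of $R$. I would choose the shift so that $J \cong \omega_R(-1)$ sits inside $\M$. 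A Hilbert-function calculation then gives $R/J$ has Hilbert function $(1, 1, 0, 0, \ldots)$, whence $R/J \cong k[y]/(y^2)$, a hypersurface of codimension one. Completing at $\M$ produces the desired ideal in $A$, giving (b). For (c), since $J$ is graded and $\M = R_{\geq 1}$, one has $J \cap \M^n = J_{\geq n}$ for all $n$, so $G_\F(J) \cong J \cong \omega_R(-1) \cong \omega_{G(A)}(-1)$ as graded $G(A)$-modules, which is (c) up to the shift.

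With (a), (b), (c) established, Theorem \ref{fiber} produces the Teter Gorenstein approximation $B = A \times_{k[[y]]/(y^2)} k[[y]]$. I would then run the $d = 1$ portion of the proof of Theorem \ref{main-strong}: with $r = 2$ the exact sequence $0 \rt k[X](-2) \rt G(B) \rt G(A) \rt 0$ shows $G(B)$ is \CM, and the socle analysis of $G(B)/x^\ast G(B)$ given there, together with (c), forces $\soc G(\ov{B}) = k$, so $G(B)$ is Gorenstein and $A$ is strongly Teter. The main obstacle is the second step: producing a graded embedding $\omega_R(-1) \hookrightarrow R$ at exactly this shift. Equivalently, one needs a nonzero degree-one element of $\Hom_R(\omega_R, R)$, i.e., an $(e-1)$-tuple of elements of $R_1$ whose syzygies match those of a minimal generating set of $\omega_R$. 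A Hilbert-function count makes this numerically consistent, and generic Gorensteinness together with minimal multiplicity should force existence; making this rigorous is the principal technical point.
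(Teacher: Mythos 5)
Your overall route is the same as the paper's: verify conditions (a), (b), (c) of Theorem \ref{main-strong} by Hilbert-function computations in the graded ring, identify $G_\F(J)$ with $J$ itself because $J$ is homogeneous and $\M^n=R_{\geq n}$, and then feed the fiber product into Theorem \ref{fiber} and the $d=1$ analysis of Theorem \ref{main-strong}. (Your observation that the domain hypothesis of Theorem \ref{main-strong} enters only through the production of $J$ is correct, and the paper implicitly relies on the same point.) However, the step you yourself flag as the ``principal technical point'' --- producing a homogeneous embedding of $\omega_R$, shifted so that it lands in $R$ as an ideal generated exactly in degree one --- is a genuine gap as written. The Hilbert-function count only shows the shift by one is numerically consistent; ``generic Gorensteinness together with minimal multiplicity should force existence'' is not an argument, and without this step neither the computation $h_{R/J}=(1,1,0,\dots)$ nor condition (c) is available.

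The paper closes this gap with a short descent you are missing. Generic Gorensteinness gives \emph{some} homogeneous embedding $J=\omega_R(r)\subseteq R$ as a proper ideal. Since $R$ has minimal multiplicity in dimension one, $\soc(R/xR)=(R/xR)_1$ for a linear nonzerodivisor $x$, so $R$ is level and $J$ is generated in a single degree $t\geq 1$. If $t\geq 2$, then $R_n=xR_{n-1}$ for $n\geq 2$ forces $J\subseteq xR$, hence $J=xJ'$ with $J'\cong J(+1)$ again a proper homogeneous ideal isomorphic to a shift of $\omega_R$ and generated in degree $t-1$. Iterating, one reaches $t=1$; one cannot reach $t=0$, since an ideal isomorphic to $\omega_R$ and containing a unit would give $\omega_R\cong R$, contradicting the assumption that $R$ is not Gorenstein. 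With this ideal $K$ in hand, your computations of $h_{R/K}$, of $G_\F(K)\cong K$, and the subsequent appeal to Theorems \ref{fiber} and \ref{main-strong} all go through exactly as in the paper.
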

\begin{proof}
We note that $h_R(z) = 1 + hz$. We also have that $R$ is a level ring. So all generators of $\omega$ have the same degree.
We have a homogeneous inclusion $J = \omega_R(r) \rt  R$ with $J$ a proper ideal.  Choose $x \in R_1$ which is $R$-regular. We note that $R_n =xR_{n-1}$ for $n \geq 2$. If all generators of $J$ are of degree $\geq 2$ then $J = xJ^\prime$ for some ideal $J^\prime$ of $R$. We note that $J^\prime \cong J[+1] \cong \omega_R(r + 1)$. Thus we can assume that there exists an ideal $K$ generated in degree one with $K \cong \omega_R(s)$. We have $h_K(z) = hz + z^2$. So $h_{R/K}(z) = 1 + z$. Also $\dim R/K = 0$. It follows that $R/K$ is a graded  hypersurface ring. After completing we obtain that $A$ is a Teter ring by Theorem \ref{fiber}.

We note that $G(A) = A$ and $G(\wh{R/K}) = R/K$. Thus the exact sequence $0 \rt \omega_A \rt A \rt \wh{R/K} \rt 0$ induces an exact sequence
$0 \rt G_\F(\omega_A) \rt R \rt R/K \rt 0$. So $G_\F(\omega_A) \cong K$ is isomorphic to the canonical module of $G(A) = R$ up to shift.  It follows from \ref{main-strong} that $A$ is a strong Teter ring.
\end{proof}

\begin{example}
\label{d1-minmult} Let $R = k[X, Y, Z]/(XY, XZ, YZ)$. Then $R$ is of minimal multiplicity. Also it is not difficult to prove that $R$ is reduced. So by Theorem \ref{dim-1-minmult} we get that $A = \wh{R_{(X,Y,Z)}} = k[[X, Y, Z]]/(XY, XZ, YZ)$ is a strongly Teter ring.
\end{example}

\begin{example}\label{veron}
Let $k$ be a field. Set $S = k[X,Y]$ and let $R = S^{<m>}$ for some $m \geq 3$. Then $A = \wh{R}$ is strongly Teter.
\end{example}
\begin{remark}
  We note that $k[X, Y]^{<2>} = k[X^2, XY, Y^2]$ is a hypersurface ring and so Gorenstein.
\end{remark}
We now give
\begin{proof}[Proof of Example \ref{veron}]. We note that $\omega_S = XY S \subseteq S$. By \cite[3.1.3]{GW} we get that $\omega_R = \omega_S^{<m>}$. We note that $\omega_R \subseteq R$. We note that $\dim_k R_1 = m + 1$. Also note that $(\omega_R)_1 = (\omega_S)_m =  (X, Y)^{(m-2)}XY$. So $\dim_k (\omega_R)_1 = m -1$. Thus the maximal ideal of $ R/\omega_R$ is generated by two  elements and it is a \CM \ ring of dimension one. So $R/\omega_R$ is a hypersurface ring. After completing $A$ is a Teter ring by Theorem \ref{main}. By an argument similar to proof of Proposition \ref{dim-1-minmult} we get that $A$ is strongly Teter.
\end{proof}

\s Let H be an additive subsemigroup of $ \mathbb{N} = \{0, 1, 2, \cdots \}$ with $0 \in  H $ such that $\mathbb{N} \setminus H$ is finite. Let $R = K[H] $ be the
numerical semigroup ring of $H$. Set $A = \wh{R} = k[[H ]]$. We give a few examples of semigroup rings whose completions are Teter. The crucial observation is that
$$\omega_R = \sum_{\alpha \in \mathbb{N} \setminus H}Rt^{-\alpha}, \quad \text{see \cite[2.1.9]{GW}.}$$

\begin{example}\label{non-std-graded-1}
Consider $R = k[t^3, t^4, t^5]$. Then $A = \wh{R} = k[[t^3, t^4, t^5 ]]$ is a Teter ring.

\textit{Proof:} We note that $H = \{ 0, n, \text{where} \ n \geq 3\}.$  So $\omega_R = R{t^{-1}} + R{t^{-2}}$. Explicitly
$$ \omega_R = kt^{-2} + kt^{-1} + \sum_{n\geq 1}kt^{n}. $$
We note that $J = t^6 \omega_R \subseteq  R$  and note that $t^4, t^5 \in J$. Thus the maximal ideal of $R/J$ is generated by one element and $\dim R/J = 0$. Hence it is a hypersurface ring. So $A$ is a Teter ring by Theorem \ref{main}. \qed
\end{example}

\begin{example}\label{non-std-graded-2}
Consider $R = k[t^4, t^5, t^{11}]$.  Then $A = \wh{R} = k[[t^4, t^5, t^{11}]]$ is a Teter ring. However $A$ is \emph{not} strongly Teter.

\textit{Proof:} We note that $H = \{ 0, 4, 5, n, \text{where} \ n \geq 8\}$. So
$$ \omega_R = R{t^{-1}} + R{t^{-2}} + R{t^{-3}} + R{t^{-6}} + R{t^{-7}}. $$
Explicitly
$$ \omega_R = kt^{-7} + kt^{-6} + kt^{-3} + kt^{-2} + kt^{-1} + \sum_{n\geq 1}kt^{n}. $$
We note that $J = t^{11}\omega_R  \subseteq R$ and note that $t^4, t^5 \in J$. Thus the maximal ideal of $R/J$ is generated by one element and $\dim R/J = 0$. Hence  it is a hypersurface ring. So $A$ is a Teter ring by Theorem \ref{main}. By \cite[p.\ 20]{S}, $G(A)$ is NOT \CM. So by \ref{strong} we get that $A$ is NOT strongly Teter.  \qed
\end{example}

\s \emph{Integer matrix rings:} Let $A = \  ^t(a_1, \ldots, a_n)$ be an integer matrix and assume that the g.c.d of $\{a_1, \ldots, a_n \}$ is $1$. Define a grading of $S = k[x_1, \ldots, x_n]$ by defining $\deg x_i = a_i$. We have $S = \bigoplus_{n \in \Z}S_n$. Set $S_0 = R(A)$. We have that each $S_n$ is a $R(A)$-module.
Let $H$ is the sub-semigroup of $\mathbb{N}^n$ consisting of $\alpha  = (\alpha_1, \ldots, \alpha_n)$ with $\sum_{i = 1}^{n}a_i\alpha_i = 0$ then $R(A)$ is the semigroup ring $k[x^\alpha \mid \alpha \in H ]$ where $x^\alpha$ denotes $x^{\alpha_1}x_2^{\alpha_2}\cdots x_n^{\alpha_n}$. Note $R(A)$ is naturally an $\mathbb{N}$-graded ring. Set $\wh{R(A)} = k[[x^\alpha \mid \alpha \in H ]]$.
 We note that $R(A)$ and so $\wh{R(A)}$ are \CM \ rings, see \cite[16.1]{Y}.

 \begin{example}\label{U} Consider the integer matrix ring $A = \ ^t(2,-1,-1,-1)$. Then $\wh{R(A)}$ is a Teter ring.

 \begin{proof} By \cite[16.10]{Y} we have $$R(A) = k[x_1x_2^2, x_1x_2x_3, x_1x_3^2, x_1x_3x_4, x_1x_4^2, x_1x_2x_4 ]. $$
 We note that $\dim R(A) = 3$.
We also have  $S_1 = (x_1x_2, x_1x_3, x_1x_4)R(A)$ is isomorphic to the canonical module of $R(A)$ (up-to shift).
We have $x_2S_1 \subseteq R(A)$ and note that
$$x_2S_1 =  ( x_1x_2^2, x_1x_2x_3,  x_1x_2x_4 ).$$
So the graded maximal ideal of $R(A)/x_2S_1$ is generated by $3$ elements.  As \\  $\dim R(A)/x_2S_1$ is two and it is \CM \ it follows that $R(A)/x_2S_1$ is a hypersurface ring.
Taking completions it follows that $\wh{R(A)}$ is a Teter ring by Theorem \ref{main}.
 \end{proof}
 \end{example}

 \begin{example}\label{V} Consider the integer matrix ring $C = \ ^t(2,1,-1,-1)$. Then $\wh{R(C)}$ is a Teter ring.

 \begin{proof} By \cite[16.12]{Y} we have $$R(C) = k[x_1x_3^2,  x_1x_3x_4, x_1x_4^2, x_2x_3, x_2x_4]. $$
 We note that $\dim R(C) = 3$.
We also have  $S_{-1} = (x_3, x_4)R(C)$ is isomorphic to the canonical module of $R(A)$ (up-to shift).
We have $x_1x_3S_{-1} \subseteq R(A)$ and note that
$$x_1x_3S_{-1} =  ( x_1x_3^2, x_1x_3x_4).$$
So the graded maximal ideal of $D = R(A)/x_1x_3S_{-1}$ is generated by $3$ elements.  As \\  $ \dim D $ is two and it is \CM \ it follows that $D$ is a hypersurface ring.
Taking completions it follows that $\wh{R(C)}$ is a Teter ring by Theorem \ref{main}.
 \end{proof}
 \end{example}

 \s \label{EH-main} Let $k$ be an algebraically closed field of characteristic zero. Let $R$ be a standard graded \CM \ $k$-algebra of dimension $d$. Assume $R$ is \emph{not} Gorenstein and is of finite representation type. Then by \cite{EH}; $R$ is one of the following rings :

 $d =1$. Then $R$ is isomorphic to $k[X,Y, Z]/(XY, XZ, YZ)$.

 $d = 2$. Then $R$ is isomorphic to $k[X,Y]^{<m>}$ for some $m \geq 3$.

 $d = 3$. Then $R$ is isomorphic to either $U = k[X, Y, Z]^{<2>}$ or $R$ is isomorphic to $V = k[X_1, X_2, X_3, X_4, X_5]/(X_1^2 -X_2X_3, X_2X_5 - X_1X_4, X_1X_5 -X_3X_4)$.

 We now give
 \begin{proof}[Proof of Theorem \ref{fin-rep}]
 We prove case by case. Set $d = \dim R$. When $d = 1$ we note that $R$ is isomorphic to $k[X,Y, Z]/(XY, XZ, YZ)$. So by \ref{d1-minmult} we get $\wh{R_\M}$ is a Teter ring.
 When $d = 2$ we have $R$ is isomorphic to $k[X,Y]^{<m>}$ for some $m \geq 3$. So by \ref{veron} we get that $\wh{R_\M}$ is a Teter ring. When
 $d = 3$ we note that $\wh{R( \ ^t(2,-1,-1,-1))}$ is isomorphic to the completion of $U$, see \cite[16.10]{Y}. So by \ref{U} we get that $\wh{U_\M}$ is Teter. We also have
 $\wh{R( \  ^t(2,1,-1,-1))}$ is isomorphic to the completion of $V$, see \cite[16.12]{Y}. So by \ref{V} we get that $\wh{V_\M}$ is Teter. The result follows.
 \end{proof}

 \begin{remark}
   Let $k$ be a field of characteristic zero. Consider $A= k[[t^3, t^4, t^5]]$. In \ref{non-std-graded-1} we proved that $A$ is Teter. Note $A$ bi-rationally dominates $k[[t^3, t^4]]$ (which is $E_6$). So $A$ has finite representation type. Thus there exists examples  of completions of non-homogeneous rings of finite representation type which are Teter.
 \end{remark}

 \begin{remark}
   We note that all the examples in \ref{EH-main} have minimal multiplicity. Consider $A = k[[t^4, t^5, t^{11}]]$  which is a Teter ring, see \ref{non-std-graded-2}. We note $A$ does not have minimal multiplicity.
 \end{remark}

 \begin{example}
 If $(A, \m)$ is a Teter ring then it is easily verified that both \\ $A[X_1,\ldots, X_n]_\n$ (where  $\n = (\m, X_1, \ldots, X_n)$) and $A[[X_1, \ldots, X_n]]$ are Teter rings.
 Applying this result to our examples creates  bountiful examples of Teter rings which are domains.
 \end{example}

It is perhaps pertinent to give an example of a local domain which is not Teter.  To enable this we prove:
\begin{proposition}\label{type-bound}
Let $(A,\m)$ be a \CM \ local ring of dimension $d$. If $A$ is Teter then $\type(A) = \mu(\m) - d$.
\end{proposition}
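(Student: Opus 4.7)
The plan is to reduce to the Artinian case via a well-chosen regular sequence and then exploit the socle structure of the Gorenstein approximation. Let $B$ be a Teter Gorenstein approximation of $A$, so there is an exact sequence $0\to M\to B\to A\to 0$ with $M$ an MCM $B$-module of multiplicity $1$. Since $\ell(M/\xb M)=e(M)=1$ for any system of parameters $\xb$ of $B$, the module $M$ is cyclic. After the standard faithfully-flat extension I may assume the residue field is infinite. Using that $\mu(\m_A),\mu(\n_B)\ge d$, I choose $\xb=x_1,\dots,x_d\in\n_B$ that is regular on each of $B$, $A$, $M$, $\omega_A$ and whose images are linearly independent in both $\n_B/\n_B^2$ and $\m_A/\m_A^2$. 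Writing $\ov{(-)}=(-)\otimes_B B/(\xb)$, this yields $0\to \ov M\to \ov B\to \ov A\to 0$ in which $\ov B$ is Artinian Gorenstein, $\ell(\ov M)=1$, and $\ov A$ is Artinian but not Gorenstein (otherwise $A$ itself would be). Since $\ov M$ is a length-one submodule of $\ov B$, it is killed by $\m_{\ov B}$ and hence coincides with the one-dimensional socle of $\ov B$; thus $\ov A=\ov B/\soc(\ov B)$, the classical Artinian Teter situation.

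The crucial step is the containment $\soc(\ov B)\subseteq\m_{\ov B}^2$. First I rule out small embedding dimension: $\mu(\m_{\ov B})=0$ gives $\ov B=k$ and $\ov A=0$, while $\mu(\m_{\ov B})=1$ forces $\ov B\cong k[x]/(x^n)$ and $\ov A\cong k[x]/(x^{n-1})$ (Gorenstein); both contradict the setup, so $\mu(\m_{\ov B})\ge 2$. Now suppose some generator $z$ of $\soc(\ov B)$ lies outside $\m_{\ov B}^2$. The ring $\ov B/(z)$ is nonzero Artinian, so it has a nonzero socle element represented by some $y\in\ov B\setminus(z)$ with $\m_{\ov B} y\subseteq(z)$. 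The case $y$ a unit would yield $\m_{\ov B}=(z)$, contradicting $\mu(\m_{\ov B})\ge 2$; the case $\m_{\ov B} y=0$ would put $y\in\soc(\ov B)=(z)$, contradicting $y\notin(z)$. So $\m_{\ov B} y=(z)$, giving $z=y_0 y$ with $y_0\in\m_{\ov B}$, whence $z\in\m_{\ov B}^2$, a contradiction.

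With $\ov M=\soc(\ov B)\subseteq \m_{\ov B}^2$ in hand, two consequences fall out. First, quotienting by an element of $\m_{\ov B}^2$ does not change $\mu$ of the maximal ideal, so $\mu(\m_{\ov A})=\mu(\m_{\ov B})$. Second, Matlis duality on the Gorenstein ring $\ov B$ identifies $\ann_{\ov B}(\m_{\ov B}^2)$ with the $k$-dual of $\ov B/\m_{\ov B}^2$, which has length $1+\mu(\m_{\ov B})$. A direct check gives $\soc(\ov A)=\ann_{\ov B}(\m_{\ov B}^2)/\soc(\ov B)$, so $\type(\ov A)=\mu(\m_{\ov B})$. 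Finally the choice of $\xb$ gives $\mu(\m_A)=\mu(\m_{\ov A})+d$ and $\type(A)=\type(\ov A)$, and chaining these yields $\type(A)=\mu(\m_A)-d$.

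The hard part will be the socle-containment step $\soc(\ov B)\subseteq\m_{\ov B}^2$: it genuinely fails when $\mu(\m_{\ov B})\le 1$, so the argument must use the hypothesis that $A$ is Teter (hence not Gorenstein, forcing $\mu(\m_{\ov B})\ge 2$) in an essential way. The Matlis-duality computation of $\type(\ov A)$ and the bookkeeping needed to pick a regular sequence satisfying all the constraints simultaneously are routine once the Artinian reduction is set up correctly.
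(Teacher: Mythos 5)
Your proof is correct, and while the reduction to dimension zero (passing to $0 \rt k \rt \ov{B} \rt \ov{A} \rt 0$ modulo a general superficial regular sequence, with $\type(A)=\type(\ov{A})$ and $\mu(\m)=\mu(\m_{\ov{A}})+d$) matches the paper exactly, your treatment of the Artinian case is genuinely different. The paper dualizes $0 \rt k \rt \ov{B} \rt \ov{A} \rt 0$ into $0 \rt \omega_{\ov{A}} \rt \ov{B} \rt k \rt 0$, extracts the exact sequence $0 \rt k \rt \omega_{\ov{A}} \rt \m_{\ov{A}} \rt 0$, and applies $\Hom(k,-)$ to identify $\soc(\ov{A})$ with $\Ext^1_{\ov{A}}(k,k)$, giving $\type(\ov{A})=\mu(\m_{\ov{A}})$ in one homological stroke. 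You instead prove Teter's classical observation that $\soc(\ov{B})\subseteq \n_{\ov{B}}^2$ once the embedding dimension is at least two, deduce $\mu(\m_{\ov{A}})=\mu(\n_{\ov{B}})$, identify $\soc(\ov{A})$ with $(0:_{\ov{B}}\n_{\ov{B}}^2)/\soc(\ov{B})$, and count lengths via Matlis duality. Your route is more elementary and makes the ring-theoretic mechanism visible (it is essentially the argument underlying Teter's original paper); the paper's route is shorter and avoids the socle-in-$\m^2$ lemma entirely. One small inaccuracy: $\ell(M/\xb M)=e(M)$ does \emph{not} hold for an arbitrary system of parameters $\xb$ of $B$ — one has $\ell(M/\xb M)=e(\xb;M)\geq e(\n;M)$, with equality only when $(\xb)$ is a reduction with respect to $M$ (e.g.\ a superficial sequence). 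Since you choose $\xb$ generically anyway, this is harmless, but the phrase ``for any system of parameters'' should be replaced by ``for a superficial sequence.''
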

\begin{remark}
In the above proposition $\type(A)$ denotes the \CM \ type of $A$, i.e., the $k =A/\m$ dimension of $\Ext^d_A(k, A)$.
\end{remark}
We now give
\begin{proof}[Proof of Proposition \ref{type-bound}.]
We may assume that the residue field of $A$ is infinite. Let $B$ be a Gorenstein Teter approximation of $A$. We have an exact sequence $0 \rt M \rt B \rt A \rt 0$; where $e(M) = 1$. We go mod $\xb$ an $M \oplus B \oplus A$-superficial sequence. So we have
$0 \rt k \rt \ov{B} \rt \ov{A} \rt 0$. So $\ov{A}$ is  Teter.  We note that $\type(A) = \type(\ov{A})$ and $\mu(\ov{\m}) = \mu(\m) -d$. Thus we can assume $A$ is an Artin Teter ring.

Dualizing the sequence $0 \rt k \rt B \rt A \rt 0$ we obtain an exact sequence $0 \rt \omega_A \rt B \rt k \rt 0$. So we obtain a surjective map $\omega_A \rt \m$ with kernel $k$.
Thus we have exact sequence $0 \rt k \rt \omega_A \rt \m \rt 0$. We apply the functor $\Hom_A(k, -)$ and noting that $\omega_A$ is the canonical module of $A$ we obtain
$\Hom_A(k, \m) \cong \Ext^1_A(k,k)$. Thus $\type(A) = \mu(\m)$.
\end{proof}

\begin{example}
Consider the following example from \cite[p.\ 395]{S2}. Let \\ $A = k[[t^5, t^6, t^7, t^9 ]]$.
We have $\mu(\m) = 4$. But $\type(A) = 2$. So $A$ is not Teter by \ref{type-bound}.
\end{example}


\begin{thebibliography} {99}



\bibitem {BH}  W. Bruns and J. Herzog,
Cohen-Macaulay Rings, revised edition,
Cambridge Studies in Advanced Mathematics, 39.
Cambridge University Press, 1998.

\bibitem{EH}
D.~Eisenbud and  J.~Herzog,
\emph{The classification of homogeneous Cohen-Macaulay rings of finite representation type},
Math. Ann.   280 (1988), no. 2, 347--352.

\bibitem{GW}
S.~Goto and K.~Watanabe,
\emph{On graded rings. I},
J. Math. Soc. Japan   30 (1978), no. 2, 179--213.

\bibitem{HM}
S.~Huckaba and  T.~Marley, 
\emph{Hilbert coefficients and the depths of associated graded rings},
J. London Math. Soc. (2)   56 (1997), no. 1, 64--76.

\bibitem{HV}
C.~Huneke and A.~Vraciu,
\emph{Rings that are almost Gorenstein},
Pacific J. Math.   225 (2006), no. 1, 85--102.

\bibitem{K}
E.~Kunz,
\emph{Introduction to commutative algebra and algebraic geometry},
Birkh\"auser Boston, Inc., Boston, MA, 1985.



 \bibitem{S}
 J.~D.~Sally,
\emph{On the associated graded ring of a local Cohen-Macaulay ring},
J. Math. Kyoto Univ.   17 (1977), no. 1, 19--21.

\bibitem{S2}
\bysame,
\emph{Cohen-Macaulay local rings of embedding dimension $e + d -2$},
J. Algebra   83 (1983), no. 2, 393--408.

\bibitem{T}
W.~Teter,
\emph{Rings which are a factor of a Gorenstein ring by its socle},
Invent. Math.   23 (1974), 153--162.


\bibitem{Y}
Y.~Yoshino,
\emph{Cohen-Macaulay modules over Cohen-Macaulay rings},
 London Mathematical Society Lecture Note Series, 146.
Cambridge University Press, Cambridge, 1990.

\end{thebibliography}
\end{document}